\newtheorem{lemma}{Lemma}[section]
\newtheorem{thm}[lemma]{Theorem}
\newtheorem{proposition}[lemma]{Proposition}
\theoremstyle{definition}
\newtheorem{notation}[lemma]{Notation}
\theoremstyle{remark}
\newtheorem{remark}[lemma]{Remark}
\newcommand\C{{\mathbb C}}
\newcommand\PP{{\mathbb P}}
\title{Counterexamples to the Complement Problem}
\author[P.-M.~Poloni]{Pierre-Marie~Poloni}
\address{Pierre-Marie Poloni \\ Universit\"{a}t Bern \\ Mathematisches Institut \\Sidlerstrasse 5 \\ CH-3012 Bern \\ Switzerland}
\email{pierre.poloni@math.unibe.ch}
\begin{document}

\begin{abstract}We provide explicit counterexamples to the so-called  Complement Problem in every dimension $n\geq3$, i.e.\ pairs of nonisomorphic irreducible algebraic hypersurfaces $H_1, H_2\subset\C^{n}$ whose complements  $\C^{n}\setminus H_1$ and $\C^{n}\setminus H_2$ are isomorphic. Since we can  arrange that one of the hypersurfaces is singular whereas the other is smooth, we also have counterexamples  in the analytic setting.
\end{abstract}

\maketitle

\section{Introduction}   

The  Complement Problem (in the affine $n$-space) is one of the ``challenging problems'' considered by Hanspeter Kraft in his survey on affine algebraic geometry at the  Bourbaki seminar \cite{Kraft}. It is formulated as follows. 
\begin{quote}
Given two irreducible hypersurfaces $H_1,H_2\subset\C^n$ and an isomorphism of their complements, does it follow that $H_1$ and $H_2$ are isomorphic? 
\end{quote}      

 Let us specify  that we work here  in the context  of  algebraic geometry.  In particular, the hypersurfaces considered above are algebraic, i.e.\ defined as the zero sets of some polynomials $f_1,f_2\in\C[x_1,\ldots,x_n]$, and  the isomorphisms are isomorphisms of algebraic varieties. Moreover, we recall that the complement $\C^n\setminus H$ of an hypersurface $H\subset\C^n$   is also an affine algebraic variety.  

The Complement Problem is a very natural question:  We want to retrieve some information   about a subvariety $X\subset M$  from its complement $M\setminus X$.  Such questions make of course sense in various contexts (as e.g.\ in knot theory, see \cite{Cappell-Shaeneson} and \cite{Gordon-Luecke}).  Closer to our immediate interests,  J\'er\'emy Blanc \cite{Blanc}  gave counterexamples to the Complement Problem for curves in the projective plane $\PP^2$. Actually, his main motivation was to disprove another conjecture, due to Hisao  Yoshihara \cite{Yoshihara}, which stated that if two irreducible curves $\Gamma_1,\Gamma_2\subset\PP^2$ have isomorphic complements $\PP^2\setminus\Gamma_1\simeq\PP^2\setminus\Gamma_2$, then they should be equivalent, i.e.\ there should exist an automorphism of $\PP^2$ sending $\Gamma_1$ onto $\Gamma_2$.

The purpose of this note is to answer Kraft's Complement Problem in the negative for every $n\geq3$.  More precisely, we will give explicit counterexamples of several different types, as described in the main theorem  below.

\begin{thm}\label{main-thm}For every integer $n\geq3$, there exist examples of 
\begin{enumerate}
\item irreducible hypersurfaces $H_1,H_2\subset\C^n$ with isomorphic complements $\C^n\setminus H_1 \simeq \C^n\setminus H_2$ such that $H_1$ and $H_2$ are  smooth and nonisomorphic; 
\item irreducible hypersurfaces $H_1,H_2\subset\C^n$ with isomorphic complements $\C^n\setminus H_1 \simeq \C^n\setminus H_2$ such that $H_1$ is smooth but  $H_2$ is singular;
\item irreducible hypersurfaces $H_1,H_2\subset\C^n$ with isomorphic complements $\C^n\setminus H_1 \simeq \C^n\setminus H_2$ such that $H_1$ and $H_2$ are  isomorphic, although there is no automorphism of $\C^n$ mapping $H_1$ onto $H_2$. 
\end{enumerate}
\end{thm}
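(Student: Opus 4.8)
The plan is to reduce everything to a statement about coordinate rings. Writing $H_i = \{f_i = 0\}$ for suitable $f_i \in \C[x_1,\dots,x_n]$, the complement $\C^n \setminus H_i$ is the affine variety $\Spec\big(\C[x_1,\dots,x_n][1/f_i]\big)$, so for each of the three items I must produce explicit polynomials $f_1, f_2$ together with an explicit $\C$-algebra isomorphism $\C[x_1,\dots,x_n][1/f_1] \cong \C[x_1,\dots,x_n][1/f_2]$. The mechanism I would exploit is that a birational self-map of $\C^n$ can restrict to a biregular isomorphism of two complements without extending to an automorphism of $\C^n$: a triangular (de~Jonquières type) transformation that divides by one of the coordinates is regular precisely away from a hypersurface, and it carries $H_1$ to a genuinely different hypersurface $H_2$ while inducing an isomorphism on the open parts. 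This is exactly the room one needs, since if the map extended to $\Aut(\C^n)$ it would send $H_1$ isomorphically onto $H_2$ and produce no counterexample.

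Concretely, I would look for the $H_i$ among hypersurfaces of ``Danielewski type'', e.g. $H_i = \{y\, a_i(x,z) = b_i(x,z)\}$ in coordinates $(x,y,z) = (x, y, z_1,\dots,z_{n-2})$, chosen so that each $H_i$ carries a visible fibration over $\C^{n-2}$ (or an explicit $\mathbb{G}_a$-action) that can be read off from $f_i$. For the complement isomorphism I would write down the candidate map and its inverse explicitly and verify biregularity by clearing denominators, using that $f_i$ is invertible on $\C^n \setminus H_i$. The geometric properties demanded in items (1) and (2) are then checked by the Jacobian criterion: for (2) one $f$ defines a smooth graph-like hypersurface while the other has a nonempty singular locus coming from the common zeros of $f$ and $\nabla f$, and for (1) both Jacobians are shown to be nonvanishing on the respective hypersurfaces.

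The genuinely hard step is the negative part: showing that $H_1 \not\cong H_2$ in item~(1), and that no element of $\Aut(\C^n)$ maps $H_1$ onto $H_2$ in item~(3). For (2) this is free, since a smooth variety cannot be isomorphic to a singular one, so the singular locus is the distinguishing invariant. For (1) I would distinguish the two smooth hypersurfaces by an intrinsic invariant of their coordinate rings that is insensitive to the explicit isomorphism of complements --- the Makar--Limanov invariant (the ring of invariants of all locally nilpotent derivations), or failing that the divisor class group or the group of units --- exhibiting that it takes different values on $H_1$ and $H_2$. Item~(3) is the most delicate, because there $H_1 \cong H_2$ as abstract varieties and one must instead separate the two \emph{embeddings}: a putative $\Phi \in \Aut(\C^n)$ carrying $H_1$ onto $H_2$ would have to respect some finer structure of the pair $(\C^n, H_i)$ (for instance the induced behaviour at infinity, or a distinguished fibration), and I would derive a contradiction by showing this structure differs for the two pairs. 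I expect this non-extendability analysis to be the main obstacle, as proving the nonexistence of isomorphisms always is; the construction of the complement isomorphism itself should be a direct, if delicate, explicit computation.
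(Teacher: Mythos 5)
Your strategy is, in outline, exactly the paper's: the counterexamples there are Danielewski-type hypersurfaces $H_{\alpha,k}=\{x_1^2\cdots x_m^2y+z^2+x_1\cdots x_m(z^2-\alpha)^k=\alpha\}\subset\C^{m+2}$, the complement isomorphism is precisely a triangular birational map of the kind you describe (Lemma \ref{lemme-complement}: $x_1\mapsto x_1/Q^k$ with a compensating correction of $y$, where $Q$ is the defining polynomial of the second hypersurface, so that $P\circ\Phi=Q$), and item (2) is handled by the smooth-versus-singular observation just as you say. So there is no disagreement about the route; the problem is that your proposal stops exactly where the difficulty begins, and the specific tools you name for the hard steps would not work. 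Note first that even the ``constructive'' half is not routine: one cannot take an arbitrary pair of Danielewski-type hypersurfaces, since the triangular map only induces an isomorphism of complements for carefully matched pairs (here, because every fiber $\{P=c\}$, $c\neq 0$, is isomorphic to $\{Q=c\}$ compatibly in $c$); your proposal never exhibits such a pair.

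The more serious gaps are the negative statements. For item (1) you propose to separate the two smooth hypersurfaces by the Makar--Limanov invariant, the divisor class group, or the units. But the very mechanism producing isomorphic complements forces the two hypersurfaces to be nearly indistinguishable: in the paper's examples, $V(P_0-1)$ and $V(P_1-1)$ (where $P_q=x_1^2\cdots x_m^2y+z^2+x_1\cdots x_mq(z^2)$) are biholomorphic (see the remark following Proposition \ref{exple1}) and are even stably isomorphic --- that is the subject of \cite{Poloni-stable} --- so all topological, analytic, and stable algebraic invariants, in particular units and the class group, coincide; moreover both carry additive group actions of the same type with kernel $\C[x_1,\ldots,x_m]$, so the Makar--Limanov invariant does not separate them either. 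The non-isomorphism is the content of Proposition \ref{prop_isom}, a genuinely hard classification theorem from \cite{MJP} and \cite{Poloni-stable} whose proof analyzes \emph{all} isomorphisms within the family rather than computing a single invariant. For item (3) your plan (``a putative automorphism would have to respect some finer structure\ldots\ I would derive a contradiction'') merely restates the goal; the paper instead invokes Proposition \ref{prop-classification-equivalence}, another classification result from \cite{Poloni-stable}, which says an automorphism of $\C^{m+2}$ can map $V(P_{q_1}-c_1)$ onto $V(P_{q_2}-c_2)$ only if $q_2(t)=\lambda q_1(\mu t)$ for some $\lambda,\mu\in\C^*$; this fails for $q_1=t-1$, $q_2=(t-1)^2$ because the degrees differ. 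Without proving statements of this strength (or importing them from the literature), your proposal establishes none of the three items in full, and for (1) and (3) the suggested invariants point in a direction that provably cannot succeed for examples produced by this mechanism.
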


 At the time that the first version of the present paper was being written,  the case of irreducible curves on $\C^2$ was still wide open. But  since then it has been solved, again in the negative, by  Blanc, Furter and Hemmig in a remarkable paper \cite{BFH} in which they make use of totally different methods to find counterexamples to the Complement Problem in the case where $n=2$.

We remark that the second kind of examples in above Theorem \ref{main-thm} provide  counterexamples to the Complement Problem in the analytic setting too. On the other hand, the nonisomorphic algebraic varieties that we will produce  in case (1) are biholomorphic, and we do not get any examples of  smooth affine algebraic varieties $V_1,V_2\subset\C^n$ (i.e.\  of affine algebraic manifolds in $\C^n$), which are not biholomorphic, although their complements $\C^n\setminus V_1$ and  $\C^n\setminus V_2$ are. By contrast,  it is shown  in \cite{BFH} that if two nonisomorphic irreducible affine curves have isomorphic complements in $\C^2$, then they are necessarily smooth and  biholomorphic.

All our examples will be realized as  hypersurfaces of $\C^{m+2}$ defined by an equation of the form  $x_1^2\cdots x_m^2y+z^2+x_1\cdots x_m(z^2-\alpha)^k=\alpha$ for some integer $k\geq0$ and some constant $\alpha\in\C$. These varieties were first studied  by Lucy Moser-Jauslin and the author in \cite{MJP} for the case where $m=1$ and then in \cite{Poloni-stable} for the general case. In particular, it was  observed that there exist such polynomials, say  $P$ and $Q$, whose zero sets $\{P=0\}$ and $\{Q=0\}$ are not isomorphic, whereas their other fibers $\{P=c\}$ and $\{Q=c\}$ are isomorphic for all $c\in\C^*$. The main ingredient of the present paper  will be to use the isomorphisms $\{P=c\}\simeq\{Q=c\}$ to construct an isomorphism between the complements $\C^{m+2}\setminus\{P=0\}$ and  
$\C^{m+2}\setminus\{Q=0\}$.

\medskip

{\bf Acknowledgement.} The author thanks the referee for helpful comments and suggestions and for pointing out an error in a previous version of Proposition \ref{exple2}. 

\section{Preliminaries}
 
Let us start by recalling some notations and results from \cite{Poloni-stable} that we will use in the sequel.
 
Throughout this paper, we fix an integer $m\geq1$ and a coordinate  system $x_1,\ldots,x_m,y,z$ on the complex  affine space $\C^{m+2}$. If $P\in\C[x_1,\ldots,x_m,y,z]$, then  $V(P)$ denotes  the zero set of $P$ in $\C^{m+2}$.

\begin{notation} Given a polynomial $q(t)\in\C[t]$, we denote by $P_q$ the polynomial of $\C[x_1,\ldots,x_m,y,z]$ defined by $$P_q=x_1^2\cdots x_m^2y+z^2+x_1\cdots x_mq(z^2).$$ 
\end{notation}

 It was shown in \cite{Poloni-stable} that the  algebraic varieties $V(P_{0}), V(P_{0}-1), V(P_{1})$ and $V(P_{1}-1)$ are pairwise nonisomorphic. Moreover,  every fiber $V(P_q-c)=P_q^{-1}(c)\subset\C^{m+2}$  is isomorphic to one of these four  and we have the following classification result.

\begin{proposition}[\cite{Poloni-stable}*{Lemma 2.2 and Proposition 2.5}]\label{prop_isom}
Let $q(t)\in\C[t]$ and  $c\in\C$. Then, the variety $V(P_q-c)$ is isomorphic to  $V(P_{q(c)}-c)$. Moreover, the latter is isomorphic to:
\begin{itemize}
\item  $V(P_{0})$ if and only if $c=0$ and $q(c)=0$;
\item  $V(P_{0}-1)$ if and only if $c\neq0$ and $q(c)=0$;
\item  $V(P_{1})$ if and only if $c=0$ and $q(c)\neq0$;
\item  $V(P_{1}-1)$ if and only if $c\neq0$ and $q(c)\neq0$.
\end{itemize}
\end{proposition}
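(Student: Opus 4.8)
The plan is to treat the two assertions separately: first the reduction $V(P_q-c)\simeq V(P_{q(c)}-c)$, and then the classification of the constant-coefficient varieties $V(P_a-c)$ with $a=q(c)\in\C$. Throughout I write $u=x_1\cdots x_m$, so that $P_q=u^2y+z^2+u\,q(z^2)$. For the first assertion I would start from the division $q(t)-q(c)=(t-c)\,r(t)$ with $r\in\C[t]$, giving $q(z^2)-q(c)=(z^2-c)\,r(z^2)$. The naive idea is to correct the $y$-coordinate by $\tfrac{q(z^2)-q(c)}{u}$; this is only a rational function on $\C^{m+2}$, but on the fiber $V(P_q-c)$ one has the relation $z^2-c=-u\bigl(uy+q(z^2)\bigr)$, which exhibits $z^2-c$ as a multiple of $u$ and thereby makes the correction \emph{polynomial} there. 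Concretely I would set
$$\Phi(x_1,\dots,x_m,y,z)=\bigl(x_1,\dots,x_m,\;y-(uy+q(z^2))\,r(z^2),\;z\bigr),$$
a polynomial endomorphism of $\C^{m+2}$, and verify the identity $P_{q(c)}\circ\Phi=P_q-u\,r(z^2)\,(P_q-c)$, which shows at once that $\Phi$ maps $V(P_q-c)$ into $V(P_{q(c)}-c)$.

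Next I would exhibit the inverse on the fiber, namely
$$\Psi(x_1,\dots,x_m,y,z)=\bigl(x_1,\dots,x_m,\;y+(uy+q(c))\,r(z^2),\;z\bigr),$$
and check $\Psi\circ\Phi=\mathrm{id}$ and $\Phi\circ\Psi=\mathrm{id}$ on the two fibers by substituting the respective relations $u\bigl(uy+q(z^2)\bigr)=c-z^2$ and $u\bigl(uy+q(c)\bigr)=c-z^2$; the two $y$-corrections then cancel, yielding $V(P_q-c)\simeq V(P_{q(c)}-c)$. This reduction is the heart of the matter: the one genuinely clever point is recognizing that the otherwise rational shift becomes a regular, invertible morphism precisely when restricted to the fiber.

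For the classification, set $a=q(c)$, so that $P_{q(c)}=u^2y+z^2+ua$, and reduce to normal forms using diagonal linear automorphisms of $\C^{m+2}$. If $a\neq0$, the map $L\colon(x_1,\dots,x_m,y,z)\mapsto(ax_1,x_2,\dots,x_m,a^{-2}y,z)$ satisfies $P_1\circ L=P_a$, hence sends $V(P_a-c)$ isomorphically onto $V(P_1-c)$; if $a=0$ there is nothing to do. If moreover $c\neq0$, a further diagonal scaling involving a square root of $c$ (rescaling $x_1$, $y$ and $z$) identifies $V(P_0-c)$ with $V(P_0-1)$ and $V(P_1-c)$ with $V(P_1-1)$. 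Running through the four cases determined by whether $c$ and $q(c)$ vanish produces exactly the four varieties $V(P_0)$, $V(P_0-1)$, $V(P_1)$, $V(P_1-1)$, which gives the ``if'' directions. For the ``only if'' directions I would invoke the fact, recalled just before the proposition from \cite{Poloni-stable}, that these four varieties are pairwise nonisomorphic: since the four cases partition all possibilities for the pair $(q(c),c)$ and each yields one specific variety among the four, the isomorphism class of $V(P_{q(c)}-c)$ both determines and is determined by which case occurs. I expect the linear reductions and this final bookkeeping to be routine, with essentially all the substance concentrated in the fiberwise isomorphism $\Phi$ of the first step.
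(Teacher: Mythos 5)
Your proposal is mathematically correct, but there is nothing in the paper to compare it against: the paper does not prove Proposition \ref{prop_isom} at all, it imports it wholesale from \cite{Poloni-stable}*{Lemma 2.2 and Proposition 2.5}. So what you have written is a self-contained reconstruction rather than an alternative to an in-paper argument. I checked the key identity: with $u=x_1\cdots x_m$ and $q(t)-q(c)=(t-c)r(t)$, one indeed gets $P_{q(c)}\circ\Phi=P_q-u\,r(z^2)(P_q-c)$ and $P_q\circ\Psi=P_{q(c)}+u\,r(z^2)(P_{q(c)}-c)$, and the fiber relations $u(uy+q(z^2))=c-z^2$ and $u(uy+q(c))=c-z^2$ make $\Psi\circ\Phi$ and $\Phi\circ\Psi$ the identity on the respective fibers; the diagonal scalings $P_1\circ L=P_a$ (for $a\neq0$) and $P_a\circ M=cP_a$ (for $c\neq0$, using a square root of $c$) then give all four ``if'' directions. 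Your fiberwise-regularization trick is, incidentally, exactly the mechanism the paper itself uses in the proof of Lemma \ref{lemme-complement}, where $\frac{Q^k-(z^2-\alpha)^k}{x_1\cdots x_m}$ is polynomial, so your reconstruction is very much in the spirit of the source. One caveat to state plainly: the ``only if'' half of the classification rests entirely on the pairwise nonisomorphy of $V(P_0)$, $V(P_0-1)$, $V(P_1)$, $V(P_1-1)$, which you invoke as recalled in the paper; that is the genuinely hard content (proved in \cite{Poloni-stable} via invariants), and neither you nor the present paper proves it, so your argument is a proof of the proposition only modulo that cited theorem --- which is a reasonable division of labor, but worth acknowledging explicitly.
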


Finally, we recall the classification of the hypersurfaces $V(P_q-c)\subset\C^{m+2}$ up to equivalence, i.e.\ up to automorphisms of the ambient space. 

\begin{proposition}[\cite{Poloni-stable}*{Proposition 3.2}]\label{prop-classification-equivalence}
Let $q_1(t),q_2(t)\in\C[t]$ be two polynomials and $c_1,c_2\in\C$ be two constants. Then, the following are equivalent. 
\begin{enumerate}
\item There exists an algebraic automorphism of $\C^{m+2}$ which maps the hypersurface $V(P_{q_1}-c_1)$ onto $V(P_{q_2}-c_2)$.
\item There exist $\lambda,\mu\in\C^*$ such that $c_2=\mu^{-1}c_1$ and $q_2(t)=\lambda q_1(\mu t)$.  
\end{enumerate}
\end{proposition}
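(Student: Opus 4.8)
The content of the statement lies in the implication $(1)\Rightarrow(2)$, so I would first dispatch the reduction and the easy direction. Since each polynomial $P_q-c$ is irreducible (being of degree one and primitive in the variable $y$, as $x_1^2\cdots x_m^2$ and $z^2+x_1\cdots x_m q(z^2)-c$ are coprime), an automorphism $\phi\in\Aut(\C^{m+2})$ carries $V(P_{q_1}-c_1)$ onto $V(P_{q_2}-c_2)$ if and only if $(P_{q_2}-c_2)\circ\phi=\kappa\,(P_{q_1}-c_1)$ for some $\kappa\in\C^*$; thus $(1)$ amounts to solving this polynomial identity. For $(2)\Rightarrow(1)$ I would simply exhibit a diagonal automorphism: given $\lambda,\mu\in\C^*$ with $c_2=\mu^{-1}c_1$ and $q_2(t)=\lambda q_1(\mu t)$, fix a square root $\nu$ of $\mu^{-1}$ and put
\[
\phi(x_1,\dots,x_m,y,z)=\bigl((\mu\lambda)^{-1}x_1,\,x_2,\dots,x_m,\,\mu\lambda^2 y,\,\nu z\bigr).
\]
Matching the three types of terms ($x_1^2\cdots x_m^2 y$, $z^2$, and $x_1\cdots x_m\,q(z^2)$) together with the constant, and using $q_2(\nu^2 z^2)=q_2(\mu^{-1}z^2)=\lambda q_1(z^2)$, one checks directly that $(P_{q_2}-c_2)\circ\phi=\mu^{-1}(P_{q_1}-c_1)$.

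The substance is $(1)\Rightarrow(2)$, where I must show that any equivalence is, up to the scalings above, diagonal, and then read off $\mu$ and $\lambda$. My plan is to locate structures on $H_1=V(P_{q_1}-c_1)$ that an equivalence is forced to preserve. The natural candidates are: the distinguished divisor $\Delta=H_1\cap\{x_1\cdots x_m=0\}$; the singular locus, whose presence and infinitesimal shape are governed by $c_1$ and the low-order coefficients of $q_1$; and the relation $z^2-c_1=-x_1\cdots x_m\,(x_1^2\cdots x_m^2\,y+q_1(z^2))$ valid on $H_1$, exhibiting $z^2-c_1$ as divisible by $x_1\cdots x_m$. The key concrete handle is that, solving for $y$ on $\{x_1\cdots x_m\neq0\}$, one gets $H_1\setminus\Delta\simeq(\C^*)^m\times\C$, whose units modulo constants form the free abelian group generated by the classes of $x_1,\dots,x_m$. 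Granting that $\Delta$ is intrinsic, an equivalence carries this group to its counterpart for $H_2$, pinning down the $x_i$ up to scalars and a lattice automorphism; the requirement that $\phi$ be a polynomial automorphism of the ambient space, together with the symmetry of $P_q$ in $x_1,\dots,x_m$, then reduces the lattice automorphism to a permutation that may be absorbed, and the displayed relation recovers $z$ up to sign and scaling. Once $\phi$ is known to scale each $x_i$ (with product $a$), $y$, and $z^2$ (by $\kappa=\nu^2$), the identity $(P_{q_2})\circ\phi=\kappa P_{q_1}$ forces $c_2=\kappa c_1$ on the constants and $a\,q_2(\kappa z^2)=\kappa q_1(z^2)$ on the $x_1\cdots x_m\,q(z^2)$ term; setting $\mu=\kappa^{-1}$ and $\lambda=\kappa/a$ yields exactly $c_2=\mu^{-1}c_1$ and $q_2(t)=\lambda q_1(\mu t)$.

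The main obstacle is precisely this rigidity step: ruling out that a genuinely nonlinear automorphism of $\C^{m+2}$ could transport one hypersurface onto another without being a scaling. Term-by-term comparison in the polynomial identity does not suffice, since $\Aut(\C^{m+2})$ is vast and uncharted for $m+2\geq3$. I expect the decisive input to be invariant-theoretic, namely that the open piece $H_1\setminus\Delta\simeq(\C^*)^m\times\C$, and hence the subalgebra generated by $x_1,\dots,x_m$, is canonically attached to $\mathcal{O}(H_1)$ (for instance via a Makar-Limanov type invariant or via the geometry of $\Delta$), so that $\phi$ must respect it and the problem collapses to the explicit automorphisms of a torus times a line. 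Establishing this canonicity uniformly in $q_1$ and $c_1$ is where the real work sits.
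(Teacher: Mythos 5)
First, note that the paper does not prove this proposition at all: it is imported verbatim as \cite{Poloni-stable}*{Proposition 3.2}, so there is no in-paper proof to compare against; your attempt must stand on its own. The preliminary reduction (irreducibility of $P_q-c$, hence $(1)$ is equivalent to $(P_{q_2}-c_2)\circ\phi=\kappa(P_{q_1}-c_1)$ for some $\kappa\in\C^*$) and the direction $(2)\Rightarrow(1)$ are correct: your diagonal automorphism checks out, with $(P_{q_2}-c_2)\circ\phi=\mu^{-1}(P_{q_1}-c_1)$ term by term.

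The plan for $(1)\Rightarrow(2)$, however, has a gap that is not merely ``work left to do'' but a step that fails as stated. You propose to force rigidity through structures intrinsic to the hypersurface $H_1$ (the divisor $\Delta$, the units of $H_1\setminus\Delta\simeq(\C^*)^m\times\C$, and in particular the claim that ``the displayed relation recovers $z$ up to sign and scaling''). But $z$ is \emph{not} intrinsically determined up to sign and scaling on $H_1$: it is only pinned down modulo the ideal generated by $x_1\cdots x_m$, because there are nonlinear (triangular) automorphisms preserving these hypersurfaces that move $z$ by multiples of $x_1\cdots x_m$ --- this is exactly the mechanism behind Proposition \ref{prop_isom}, which says the abstract isomorphism type of $V(P_q-c)$ remembers only the value $q(c)$, never the full polynomial $q$. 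Consequently, no argument channelled through invariants of the abstract varieties $H_1,H_2$ can ever recover the conclusion $q_2(t)=\lambda q_1(\mu t)$: the paper's own Proposition \ref{exple3} ($q_1=t-1$, $q_2=(t-1)^2$, $c_1=c_2=1$) gives isomorphic hypersurfaces, carrying identical intrinsic structures, for which the relation $q_2(t)=\lambda q_1(\mu t)$ is impossible on degree grounds. For the same reason your final readoff, which presumes the ambient $\phi$ scales each coordinate, cannot be ``granted'': an equivalence composed with a triangular automorphism preserving $V(P_{q_2}-c_2)$ (such as those arising from the $\C_+$-actions studied in \cite{MJP}) is a nondiagonal equivalence, so diagonality is false without a normalization step, and proving that such a normalization exists --- by analyzing the identity $(P_{q_2}-c_2)\circ\phi=\kappa(P_{q_1}-c_1)$ globally on $\C^{m+2}$, where the off-hypersurface behavior of $\phi$ is what remembers the higher coefficients of $q$ --- is precisely the content of \cite{Poloni-stable}*{Proposition 3.2} that your sketch leaves unaddressed.
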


\section{Explicit examples}

All our examples will consist in hypersurfaces $H_{\alpha,k}$ in $\C^{m+2}$ defined by an equation of the  form
\[x_1^2\cdots x_m^2y+z^2+x_1\cdots x_m(z^2-\alpha)^k=\alpha\]
for some integer $k\geq0$ and some constant $\alpha\in\C$. By  Proposition \ref{prop_isom}, the variety $H_{\alpha,k}=V(P_{(t-\alpha)^k}-\alpha)$ is isomorphic to: 
$$\left\{\begin{array}{ll} 
V(P_{0}) & \text{if  } \alpha=0 \text{ and } k\geq1,\\
V(P_{0}-1) & \text{if  } \alpha\neq0 \text{ and } k\geq1,\\
V(P_{1}) & \text{if  } \alpha=0 \text{ and } k=0,\\
V(P_{1}-1) & \text{if  } \alpha\neq0 \text{ and } k=0.
\end{array}\right.$$

In particular, since $V(P_{0})$, $V(P_{0}-1)$, $V(P_{1})$ and $V(P_{1}-1)$ are pairwise nonisomorphic, we observe that  $H_{\alpha,k}\not\simeq H_{\alpha,0}$ if $k\neq0$.

\begin{lemma}\label{lemme-complement}The hypersurfaces   $H_{\alpha,k}$ and $H_{\alpha,k'}$ have isomorphic complements, i.e.\   $\C^{m+2}\setminus H_{\alpha,k} \simeq \C^{m+2}\setminus H_{\alpha,k'}$ for all $\alpha\in\C$ and all $k,k'\geq0$.  
\end{lemma}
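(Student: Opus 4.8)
The plan is to realize both complements as total spaces of the morphisms $g_k:=P_{(t-\alpha)^k}$ and $g_{k'}:=P_{(t-\alpha)^{k'}}$ over the punctured line $\C\setminus\{\alpha\}$, and to produce an isomorphism between them respecting these morphisms. Indeed, since $H_{\alpha,k}=V(P_{(t-\alpha)^k}-\alpha)=g_k^{-1}(\alpha)$, the complement $\C^{m+2}\setminus H_{\alpha,k}$ is exactly $g_k^{-1}(\C\setminus\{\alpha\})$, and likewise for $k'$. Writing $u:=x_1\cdots x_m$, the decisive feature of the complement is that the function $w:=g_k-\alpha$ is \emph{invertible} there, whereas it is only a non-unit regular function on all of $\C^{m+2}$; this is precisely what will let us carry out operations on the complement that do not extend to the ambient space, consistent with the fact that $H_{\alpha,k}$ and $H_{\alpha,0}$ are nonisomorphic for $k\neq0$.

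For a fixed value $c\neq\alpha$, Proposition \ref{prop_isom} already tells us that the fibers $g_k^{-1}(c)=V(P_{(t-\alpha)^k}-c)$ and $g_{k'}^{-1}(c)=V(P_{(t-\alpha)^{k'}}-c)$ are isomorphic, and I would make the underlying isomorphism explicit as a composition of three elementary moves. First, a shear $y\mapsto y+u^{-1}\big((z^2-\alpha)^k-(c-\alpha)^k\big)$ carries $V(P_{(t-\alpha)^k}-c)$ onto $V(P_{(c-\alpha)^k}-c)$; this is regular on the fiber because there $z^2-c\equiv0\pmod u$, so the numerator is divisible by $u$. Second, since $(c-\alpha)^k$ and $(c-\alpha)^{k'}$ are nonzero constants, the scaling $x_1\mapsto(c-\alpha)^{k-k'}x_1$, $y\mapsto(c-\alpha)^{2(k'-k)}y$ (fixing the other coordinates) is an automorphism of $\C^{m+2}$ sending $V(P_{(c-\alpha)^k}-c)$ onto $V(P_{(c-\alpha)^{k'}}-c)$. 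Third, the inverse of the analogous shear carries $V(P_{(c-\alpha)^{k'}}-c)$ back onto $V(P_{(t-\alpha)^{k'}}-c)$. Each move preserves the value $c$, so the composite is an isomorphism of fibers over the same point of $\C\setminus\{\alpha\}$.

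The key step is then to globalize: I would take the single morphism $\Phi$ on $\C^{m+2}\setminus H_{\alpha,k}$ obtained from the composite above by substituting the \emph{function} $g_k$ for the constant $c$ everywhere, so that $c-\alpha$ becomes $w=g_k-\alpha$. Two facts make $\Phi$ well defined. On the one hand, directly from the definition of $g_k$ one has $w\equiv z^2-\alpha\pmod u$ in the polynomial ring, whence $w^j\equiv(z^2-\alpha)^j\pmod u$ for every $j$; this shows that the shear numerators $(z^2-\alpha)^k-w^k$ and $(z^2-\alpha)^{k'}-w^{k'}$ are genuinely divisible by $u$, so the sheared $y$-coordinates are regular. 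On the other hand, the rescaling introduces only integer powers of $w$, which are regular on the complement precisely because $w$ is a unit there. By construction $g_{k'}\circ\Phi=g_k$, so $\Phi$ lands in $g_{k'}^{-1}(\C\setminus\{\alpha\})=\C^{m+2}\setminus H_{\alpha,k'}$.

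It remains to check that $\Phi$ is an isomorphism, which I would do by exhibiting its inverse as the morphism $\Phi'$ built by the identical recipe with the roles of $k$ and $k'$ exchanged; since $g_{k'}\circ\Phi=g_k$, the value $w$ is common to a point and its image, so $\Phi'\circ\Phi$ and $\Phi\circ\Phi'$ reduce to the invertible fiberwise composites and are readily seen to be the identity. The main obstacle I anticipate is purely the computational bookkeeping: verifying the two divisibilities by $u$ and confirming that the three substituted moves compose to the identity after the $k\leftrightarrow k'$ swap, keeping careful track of how $u$ itself is rescaled in the middle step. The conceptual content—that everything hinges on $w$ being invertible on the complement while failing to be so on $\C^{m+2}$—is what guarantees an isomorphism of complements without an isomorphism of the hypersurfaces.
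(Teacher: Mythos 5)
Your proposal is correct and is essentially the paper's own proof: when you specialize to $k'=0$, your globalized map (substitute the function $g_k$ for the constant $c$, so that $c-\alpha$ becomes the unit $w=g_k-\alpha$) collapses to exactly the paper's explicit morphism $\Psi:(x_1,\ldots,x_m,y,z)\mapsto\bigl(P^kx_1,x_2,\ldots,x_m,\tfrac{1}{P^{2k}}(y-\tfrac{P^k-(z^2-\alpha)^k}{x_1\cdots x_m}),z\bigr)$ with $P=w$, and your $k\leftrightarrow k'$ swap gives its inverse $\Phi$, with the same key verifications (divisibility of $(z^2-\alpha)^k-w^k$ by $x_1\cdots x_m$ and the relation $g_{k'}\circ\Phi=g_k$). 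The only differences are presentational: the paper reduces to the case $k'=0$ and states the formulas outright, whereas you derive them from the fiberwise isomorphisms of Proposition \ref{prop_isom} and treat general $k,k'$ in one step.
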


\begin{proof}
Following  the notation of the previous section, we have that  $H_{\alpha,k}=V(P_{q_k}-\alpha)$, where  $q_k(t)=(t-\alpha)^k\in\C[t]$. To prove that  $H_{\alpha,k}$ and $H_{\alpha,k'}$ have isomorphic complements, it suffices to prove that $\C^{m+2}\setminus H_{\alpha,k} \simeq \C^{m+2}\setminus H_{\alpha,0}$ for all $\alpha\in\C$ and all $k\geq1$. We do this by giving an explicit isomorphism.

 We set $P=P_{q_k}-\alpha$ and $Q=P_{q_0}-\alpha$, so that the coordinate rings of $\C^{m+2}\setminus H_{\alpha,k}$ and $\C^{m+2}\setminus H_{\alpha,0}$ are isomorphic to the rings 
 $\C[x_1,\ldots,x_m,y,z,\frac{1}{P}]$ and $\C[x_1,\ldots,x_m,y,z,\frac{1}{Q}]$, respectively.

Next, we consider the morphisms 
\[\begin{array}{rrcl}
\Phi:&\C^{m+2}\setminus H_{\alpha,0}&\to&\C^{m+2}\\
&(x_1,\ldots,x_m,y,z)&\mapsto&\left(\dfrac{x_1}{Q^k},x_2,\ldots,x_m,yQ^{2k}+Q^k\dfrac{Q^k-(z^2-\alpha)^k}{x_1\cdots x_m},z\right)
\end{array}\]
and

\[\begin{array}{rrcl}
\Psi:&\C^{m+2}\setminus H_{\alpha,k}&\to&\C^{m+2}\\
&(x_1,\ldots,x_m,y,z)&\mapsto&\left(P^{k}x_1,x_2,\ldots,x_m,\dfrac{1}{P^{2k}}(y-\dfrac{P^k-(z^2-\alpha)^k}{x_1\cdots x_m}),z\right)
\end{array}.\] 

We remark that the above morphisms are well defined, since $\frac{Q^k-(z^2-\alpha)^k}{x_1\cdots x_m}$ and  $\frac{P^k-(z^2-\alpha)^k}{x_1\cdots x_m}$ are both elements of $\C[x_1,\ldots,x_m,y,z]$.

One checks by a straightforward calculation that $P\circ\Phi=Q$ and $Q\circ\Psi=P$. This shows that $\Phi(\C^{m+2}\setminus H_{\alpha,0})\subset\C^{m+2}\setminus H_{\alpha,k}$ and $\Psi(\C^{m+2}\setminus H_{\alpha,k})\subset\C^{m+2}\setminus H_{\alpha,0}$. Finally, one easily checks that $\Phi\circ\Psi=\text{id}_{\C^{m+2}\setminus H_{\alpha,k}}$ and $\Psi\circ\Phi=\text{id}_{\C^{m+2}\setminus H_{\alpha,0}}$.
 \end{proof}

Combining  Proposition \ref{prop_isom} with Lemma \ref{lemme-complement}, we obtain the following counterexamples to the  Complement Problem.

\begin{proposition}\label{exple1}Let $m\geq1$ and let $H_1$ and $H_2$ be the irreducible hypersurfaces of $\C^{m+2}$ that are defined by the equations 
\[x_1^2\cdots x_m^2y+z^2+x_1\cdots x_m(z^2-1)=1\]
and
\[x_1^2\cdots x_m^2y+z^2+x_1\cdots x_m=1,\]
respectively. Then, $H_1$ and $H_2$ are smooth  and not isomorphic, although they have isomorphic complements  $\C^{m+2}\setminus H_1 \simeq \C^{m+2}\setminus H_2$.  
\end{proposition}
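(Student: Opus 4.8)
The plan is to recognize $H_1$ and $H_2$ as members of the family $H_{\alpha,k}$ and then to treat the three assertions separately, two of them being immediate consequences of the material already assembled. Setting $\alpha=1$ and $q_k(t)=(t-1)^k$, we have $H_1=H_{1,1}=V(P_{q_1}-1)$ and $H_2=H_{1,0}=V(P_{q_0}-1)$. The isomorphism of complements $\C^{m+2}\setminus H_1\simeq\C^{m+2}\setminus H_2$ is then exactly Lemma \ref{lemme-complement} applied with $\alpha=1$, $k=1$ and $k'=0$. The non-isomorphism of $H_1$ and $H_2$ is equally immediate from the classification recalled at the start of this section: since $\alpha=1\neq0$, one reads off $H_{1,1}\simeq V(P_0-1)$ and $H_{1,0}\simeq V(P_1-1)$, and these two varieties figure among the four shown to be pairwise nonisomorphic in \cite{Poloni-stable} (equivalently, one invokes the observation that $H_{\alpha,k}\not\simeq H_{\alpha,0}$ whenever $k\neq0$).

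What actually requires a computation are the two remaining properties: that each $H_i$ is an irreducible and smooth hypersurface. For irreducibility I would use that, writing $f$ for either defining polynomial, $f$ has degree one in the variable $y$, of the shape $f=(x_1\cdots x_m)^2\,y+b$ with $b\in\C[x_1,\ldots,x_m,z]$. A degree-one polynomial over the UFD $\C[x_1,\ldots,x_m,z]$ is irreducible as soon as its two coefficients are coprime, because in any factorization one factor must be constant in $y$ and would therefore divide both $(x_1\cdots x_m)^2$ and $b$. Coprimality is clear here: specializing $x_i=0$ sends $b$ to $z^2-1\neq0$, so no $x_i$ divides $b$, and hence $\gcd\big((x_1\cdots x_m)^2,b\big)=1$. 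As $f$ is irreducible, $V(f)$ is irreducible.

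The only genuine calculation, and the step I expect to demand the most care, is the smoothness of $H_1$ and $H_2$; I would prove it by checking that the gradient of $f$ does not vanish anywhere on $V(f)$. The pivotal remark is that $\partial f/\partial y=(x_1\cdots x_m)^2$ for both polynomials, so any singular point must lie on the locus $x_1\cdots x_m=0$. On $V(f)$ this locus is very constrained: once $x_1\cdots x_m=0$, every term of $f$ carrying a factor $x_1\cdots x_m$ drops out and the defining equation collapses to $z^2-1=0$, forcing $z=\pm1$. It then suffices to compute $\partial f/\partial z$, which equals $2z$ for $H_2$ and $2z(1+x_1\cdots x_m)=2z$ for $H_1$ after imposing $x_1\cdots x_m=0$; since $z=\pm1\neq0$ this derivative is nonzero, contradicting the assumed singularity. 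Hence both hypersurfaces are smooth, which completes the proof. The argument is short; its only mild subtlety is to notice that the vanishing of $\partial f/\partial y$ already confines any prospective singular point to $x_1\cdots x_m=0$, where the equation simplifies dramatically.
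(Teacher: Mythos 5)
Your proof is correct and follows essentially the same route as the paper: the non-isomorphism of $H_1\simeq V(P_0-1)$ and $H_2=V(P_1-1)$ comes from Proposition \ref{prop_isom}, and the isomorphism of complements from Lemma \ref{lemme-complement}. Your direct verifications of irreducibility and smoothness (which the paper leaves implicit, as these properties are established for $V(P_0-1)$ and $V(P_1-1)$ in the cited reference \cite{Poloni-stable}) are a sound supplement and are carried out correctly.
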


\begin{proof}
On the one hand,  Proposition \ref{prop_isom} implies that the hypersurfaces $H_1\simeq V(P_{0}-1)$ and $H_2= V(P_{1}-1)$ are not isomorphic. On the other hand, their complements are isomorphic by Lemma \ref{lemme-complement}.
\end{proof}

\begin{remark}Even if they are not isomorphic as algebraic varieties, the above hypersurfaces $H_1\simeq V(P_{0}-1)$ and $H_2= V(P_{1}-1)$ are  biholomorphic \cite{Poloni-stable}*{Remark 2.6}.
\end{remark}

We now give counterexamples in the analytic category. For this, we remark that the hypersurface $H_{0,0}$ is smooth  in the case where $m=1$. Nevertheless, by Lemma \ref{lemme-complement}, its complement $\C^3\setminus H_{0,0}$ in $\C^3$ is isomorphic to that of the singular hypersurface $H_{0,1}$.  Considering the cylinders over these two hypersurfaces, we obtain  nonbiholomorphic  counterexamples to the Complement Problem in any dimension $n\geq3$. 

\begin{proposition}\label{exple2}   Let $m=1$ and denote by  $S_1$ and $S_2$  the irreducible hypersurfaces of $\C^{m+2}=\C^{3}$ that are defined by the equations 
\[x_1^2y+z^2+x_1z^2=0\]
and
\[x_1^2y+z^2+x_1=0,\]
respectively. Let $m'\geq0$ be any nonnegative integer and consider the hypersurfaces $H'_1=S_1\times\C^{m'}$ and $H'_2=S_2\times\C^{m'}$ in $\C^{m'+3}$.

Then, the complements  $\C^{m'+3}\setminus H'_1$ and $\C^{m'+3}\setminus H'_2$ are isomorphic. However, since  $H'_1$ is singular and $H'_2$ is smooth, $H'_1$ and $H'_2$ are not biholomorphic.
\end{proposition}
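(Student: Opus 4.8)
The plan is to recognize $S_1$ and $S_2$ as the members $H_{0,1}$ and $H_{0,0}$ of the family of Section 3 in the case $m=1$, to transport the isomorphism of complements supplied by Lemma \ref{lemme-complement} through the cylinder construction, and then to separate the two hypersurfaces by the Jacobian criterion. With coordinates $x_1,y,z$ on $\C^3$, the defining equation of $S_1$ is $P_{q_1}$ with $q_1(t)=t$ and that of $S_2$ is $P_{q_0}$ with $q_0(t)=1$, so that $S_1=H_{0,1}$ and $S_2=H_{0,0}$. Both are irreducible, since each defining polynomial has degree one in $y$ over the factorial ring $\C[x_1,z]$ and is primitive there (the coefficient $x_1^2$ of $y$ is coprime to the $y$-free part, as $x_1$ divides neither $z^2+x_1z^2$ nor $z^2+x_1$); consequently $H'_i=S_i\times\C^{m'}$ is irreducible as well.

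For the complements, I would apply Lemma \ref{lemme-complement} with $\alpha=0$, $k=1$ and $k'=0$ to obtain an isomorphism $\C^3\setminus S_1\simeq\C^3\setminus S_2$. Since $H'_i=S_i\times\C^{m'}$, its complement in $\C^{m'+3}=\C^3\times\C^{m'}$ is exactly $(\C^3\setminus S_i)\times\C^{m'}$, so forming the product of the previous isomorphism with the identity of $\C^{m'}$ gives $\C^{m'+3}\setminus H'_1\simeq\C^{m'+3}\setminus H'_2$.

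It remains to compare the hypersurfaces themselves, which I would do by a direct gradient computation. For $f=x_1^2y+z^2+x_1$ the partials are $2x_1y+1$, $x_1^2$ and $2z$; vanishing of the second forces $x_1=0$, whence the first equals $1\neq0$, so $S_2$ is smooth and hence so is $H'_2=S_2\times\C^{m'}$. (This is where the hypothesis $m=1$ is essential: for $m\geq2$ the analogous $H_{0,0}$ already acquires singular points.) For $g=x_1^2y+z^2+x_1z^2$ the partials are $2x_1y+z^2$, $x_1^2$ and $2z(1+x_1)$, which vanish together with $g$ along the whole line $\{x_1=z=0\}$; thus $S_1$ is singular, and its singular locus crossed with $\C^{m'}$ makes $H'_1=S_1\times\C^{m'}$ singular too.

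The concluding step, and the only genuinely delicate point, is to pass from this algebraic dichotomy to the statement that $H'_1$ and $H'_2$ are not biholomorphic. Here I would invoke the classical fact that, for a reduced complex variety, a point is a manifold point in the analytic topology if and only if it is smooth in the algebraic sense (regularity of the local ring). Thus the algebraic singularities of $H'_1$ persist as genuine analytic singularities, where the analytic tangent space has dimension larger than $\dim H'_1$, so $H'_1$ is not a complex manifold; being a complex manifold is a biholomorphic invariant, so $H'_1$ cannot be biholomorphic to the smooth variety $H'_2$. The main obstacle is precisely to ensure that no singularity is smoothed out in the analytic category, which is why one needs the equivalence ``algebraically smooth'' $\Leftrightarrow$ ``analytically nonsingular'' and not merely the Jacobian computation.
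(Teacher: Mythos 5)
Your proof is correct and takes essentially the same route as the paper: identify $S_1=H_{0,1}$ and $S_2=H_{0,0}$, invoke Lemma \ref{lemme-complement} with $\alpha=0$, $k=1$, $k'=0$ to get the complement isomorphism (which passes to the cylinders), and separate the two hypersurfaces by the Jacobian criterion. The paper treats the smoothness/singularity check, the product decomposition of the complements, and the persistence of algebraic singularities in the analytic category as straightforward verifications; you have simply written these details out in full (correctly, including the observation that $m=1$ is needed for $S_2$ to be smooth).
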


\begin{proof}
It is straightforward to check that  $S_1$   is singular and that  $S_2$  is smooth. Hence, $H'_1$ is singular and $H'_2$ is smooth. Since $S_1=H_{0,1}$ and $S_2=H_{0,0}$, their complements  $\C^{3}\setminus S_1$ and $\C^{3}\setminus S_2$ are isomorphic by Lemma \ref{lemme-complement}. This implies that  $H'_1$ and $H'_2$ have isomorphic complements in $\C^{m'+3}$.
\end{proof}

Let us conclude by giving, thanks to Proposition \ref{prop-classification-equivalence}, an example of two smooth nonequivalent hypersurfaces which are isomorphic and have isomorphic complements.

\begin{proposition}\label{exple3}Let $m\geq1$ and let $H''_1$ and $H''_2$ be the hypersurfaces of $\C^{m+2}$ that are defined by the equations 
\[x_1^2\cdots x_m^2y+z^2+x_1\cdots x_m(z^2-1)=1\]
and
\[x_1^2\cdots x_m^2y+z^2+x_1\cdots x_m(z^2-1)^2=1,\]
respectively. Then, $H''_1$ and $H''_2$ are smooth irreducible varieties which are isomorphic and have isomorphic complements in $\C^{m+2}$. Nevertheless, no automorphisms of $\C^{m+2}$ map $H''_1$ onto $H''_2$.
\end{proposition}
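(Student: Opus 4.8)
The plan is to recognize $H''_1$ and $H''_2$ as members of the family $H_{\alpha,k}$ and then to extract each assertion from the three results recalled above. Writing $q_1(t)=t-1$ and $q_2(t)=(t-1)^2$, we have $H''_1=V(P_{q_1}-1)=H_{1,1}$ and $H''_2=V(P_{q_2}-1)=H_{1,2}$, both with $\alpha=1\neq0$ and $k\geq1$. By the classification recorded at the start of this section (a consequence of Proposition \ref{prop_isom}), both varieties are therefore isomorphic to $V(P_{0}-1)$; in particular $H''_1\simeq H''_2$. Moreover $H''_1$ is already the hypersurface $H_1$ of Proposition \ref{exple1}, which is smooth and irreducible, so $V(P_{0}-1)$ is smooth and irreducible, and hence so is $H''_2$ by transport of structure.

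For the complements I would simply apply Lemma \ref{lemme-complement} with $\alpha=1$, $k=1$ and $k'=2$, which gives $\C^{m+2}\setminus H''_1\simeq\C^{m+2}\setminus H''_2$ directly.

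It then remains to rule out any automorphism of $\C^{m+2}$ carrying $H''_1$ onto $H''_2$, and this is the real content of the statement. Here I would invoke the equivalence criterion of Proposition \ref{prop-classification-equivalence} with $c_1=c_2=1$: such an automorphism exists if and only if there are $\lambda,\mu\in\C^*$ with $1=\mu^{-1}\cdot 1$ and $q_2(t)=\lambda q_1(\mu t)$. The first relation forces $\mu=1$, whereupon the second becomes $(t-1)^2=\lambda(t-1)$; a comparison of degrees shows this is impossible for every $\lambda\in\C^*$, so no such automorphism can exist.

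There is no genuine computational obstacle, since everything is delivered mechanically by the cited propositions; the only point worth stressing is the conceptual tension that makes the example interesting, namely that $H''_1$ and $H''_2$ are \emph{isomorphic as abstract varieties} yet \emph{inequivalent under $\Aut(\C^{m+2})$}. The decisive mechanism is that fixing $\mu=1$ (forced by $c_1=c_2$) pins the rescaling entirely, after which the degree mismatch between the linear $q_1$ and the quadratic $q_2$ obstructs equivalence.
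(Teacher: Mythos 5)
Your proposal is correct and follows essentially the same route as the paper: identify $H''_1=H_{1,1}$ and $H''_2=H_{1,2}$, get the isomorphism $H''_1\simeq H''_2\simeq V(P_0-1)$ from Proposition \ref{prop_isom}, the isomorphism of complements from Lemma \ref{lemme-complement}, and the non-equivalence from Proposition \ref{prop-classification-equivalence}. You merely spell out details the paper leaves implicit (the forcing of $\mu=1$ and the degree mismatch between $q_1$ and $q_2$, plus the smoothness and irreducibility claims), which is a welcome elaboration rather than a different argument.
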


\begin{proof}Proposition \ref{prop-classification-equivalence} shows that the hypersurfaces $H''_1=V(P_{(t-1)}-1)=H_{1,1}$ and $H''_2=V(P_{(t-1)^2}-1)=H_{1,2}$ are not equivalent. Nevertheless, their complements   are isomorphic by Lemma \ref{lemme-complement}. 
\end{proof}


\begin{bibdiv}
\begin{biblist}

\bib{Blanc}{article}{
   author={Blanc, J{\'e}r{\'e}my},
   title={The correspondence between a plane curve and its complement},
   journal={J. Reine Angew. Math.},
   volume={633},
   date={2009},
   pages={1--10},
}

\bib{BFH}{article}{
   author={Blanc, J{\'e}r{\'e}my},
   author={Furter, Jean-Philippe},
   author={Hemmig, Mattias},
   title={Exceptional isomorphisms between complements of affine plane curves},
   journal={preprint  	arXiv:1609.06682 [math.AG]},
   date={2016},
  }

\bib{Cappell-Shaeneson}{article}{
   author={Cappell, Sylvain E.},
   author={Shaneson, Julius L.},
   title={There exist inequivalent knots with the same complement},
   journal={Ann. of Math. (2)},
   volume={103},
   date={1976},
   number={2},
   pages={349--353},,
}

\bib{Gordon-Luecke}{article}{
   author={Gordon, C. McA.},
   author={Luecke, J.},
   title={Knots are determined by their complements},
   journal={J. Amer. Math. Soc.},
   volume={2},
   date={1989},
   number={2},
   pages={371--415},
}	

\bib{Kraft}{article}{
    AUTHOR = {Kraft, Hanspeter},
     TITLE = {Challenging problems on affine {$n$}-space},
      NOTE = {S\'eminaire Bourbaki, Vol.\ 1994/95},
   JOURNAL = {Ast\'erisque},
    NUMBER = {237},
      YEAR = {1996},
     PAGES = {Exp.\ No.\ 802, 5, 295--317},
}

\bib{MJP}{article}{
   author={Moser-Jauslin, Lucy},
   author={Poloni, Pierre-Marie},
   title={Embeddings of a family of Danielewski hypersurfaces and certain
   $\bold C^+$-actions on $\bold C^3$},
   journal={Ann. Inst. Fourier (Grenoble)},
   volume={56},
   date={2006},
   number={5},
   pages={1567--1581},
}

\bib{Poloni-stable}{article}{
   author={Poloni, Pierre-Marie},
   title={A note on the stable equivalence problem},
   journal={J. Math. Soc. Japan},
   volume={67},
   date={2015},
   number={2},
   pages={753--761},
}

\bib{Yoshihara}{article}{
   author={Yoshihara, Hisao},
   title={On open algebraic surfaces ${\bf P}^{2}-C$},
   journal={Math. Ann.},
   volume={268},
   date={1984},
   number={1},
   pages={43--57},
}

\end{biblist}
\end{bibdiv}

\end{document}